     \def\section{\@startsection{section}{1}%
     \z@{.7\linespacing\@plus\linespacing}{.5\linespacing}%
     {\bfseries
     \centering
     }}
     \def\@secnumfont{\bfseries}
\newtheorem{theorem}{Theorem}[section]
\newtheorem{lemma}[theorem]{Lemma}
\theoremstyle{definition}
\theoremstyle{remark}
\numberwithin{equation}{section}
\pgfplotsset{%
    compat=1.8,
    compat/show suggested version=false,
}
\tikzset{shiny ball/.style={
  fill=none, draw=none, shading=ball, shading angle=-15,
  postaction={fill=white, path fading=ball, opacity=0.75, fading angle=45},
  postaction={fill=white, path fading=ring}
}}
  \tikzstyle{mybox} = [draw=black, fill=black!20, very thick,
\tikzstyle{fancytitle} =[fill=black, text=white]
\tikzset{%
  >=latex, 
  inner sep=0pt,%
  outer sep=2pt,%
  mark coordinate/.style={inner sep=0pt,outer sep=0pt,minimum size=3pt,
    fill=black,circle}%
}
\newcommand{\mbr}{{\mathbb R}}
\newcommand{\cl}{{\mathcal L}}
\newcommand{\ovs}{\overline{\sigma}}
\newcommand{\la}{{\langle}}
\newcommand{\ra}{{\rangle}}
\newcommand{\norm}[1]{\left\lVert#1\right\rVert}
\begin{document}
\title[Limiting Means for Spherical Slices]{Limiting Means for Spherical Slices}

\author[Amy Peterson]{Amy Peterson*}
\thanks{* Corresponding author}
\address{\hskip -.05in  Amy Peterson: Department of Mathematics, University of Connecticut, Storrs, CT 062569, USA}
\email{amy.peterson@uconn.edu}

\author[Ambar N. Sengupta] {Ambar N.~Sengupta}
 \address{ \hskip -.05in  Ambar N. Sengupta: Department of Mathematics \\
  University  of Connecticut\\
Storrs, CT 062569}
\email{ambarnsg@gmail.com}

\subjclass[2010]{Primary 28C20, Secondary 44A12}

\keywords{Gaussian Radon transform, spherical mean}


\begin{abstract}
We show that for a suitable class of functions of finitely-many variables, the limit of integrals along slices of a high dimensional sphere is a Gaussian integral on a corresponding finite-codimension affine subspace in infinite dimensions.
 \end{abstract}

\maketitle

 \section{Introduction}\label{s:intro}

 In this paper we generalize a result of \cite{SenGRL2018} showing that the large-$N$ limit of the integral of a function $f$ over affine slices of  a high dimensional sphere
$S^{N-1}(\sqrt{N})$  is Gaussian. In  \cite{SenGRL2018} this was proved for bounded $f$, and here we establish the result for $f$ in a suitable $L^p$ space, for any $p>1$.

 \subsection{Notation, Definitions, and Background}
 Let $A$ be a closed affine subspace of $l^2$ of finite codimension $m$ then there is
 $$
 Q: l^2 \to \mathbb{R}^m
 $$
 a continuous linear surjection and $w_0 \in \mathbb{R}^m$ so that we can write $A$ as a level set of $Q$,
 \begin{equation}\label{Aaslevelset}
 A= Q^{-1}(w_0).
 \end{equation}
 Let $z^0$ be the point on $Q^{-1}(w_0)$ closest to the origin and $S^{N-1}(\sqrt{N})$ be the sphere in $\mbr^N$ of radius $\sqrt{N}$ centered at the origin. Now we are interested in 'circles' formed by intersecting the part of $A$ in $\mbr^N$, $A_N$, with the sphere $S^{N-1}(\sqrt{N})$:
\begin{equation}\label{circle}
 S_{A_N} := A_N \cap S^{N-1}(\sqrt{N}).
\end{equation}
 Let
 $$
 Q_N : \mathbb{R}^N \to \mathbb{R}^m
 $$
defined by $Q_N = QJ_N$ where $J_N$ is the inclusion map from $\mathbb{R}^N$ to $l^2$. We can write $A_N = Q_N^{-1}(w_0)$ and the point on $A_N$ closest to the origin as $z^0_N$.
Thus the 'circle' $S_{A_N}$ is a sphere with center at $z^0_N$ and radius $a_{z^0_N} = \sqrt{ a^2 - |z^0_N|^2}$ where $a = \sqrt{N}$. See Figure \ref{circle1}.

Note that $z_N^0$ converges weakly to $z^0$ and so
\begin{eqnarray}\label{limitz}
Tz_N^0 \to Tz^0
\end{eqnarray}
for continuous linear $T: H \to X$ where $X$ is finite dimensional. See \cite{SenGRL2018} Proposition 4.1 for full detail.

 \begin{figure}\label{circle1}
\tikzset{
    partial ellipse/.style args={#1:#2:#3}{
        insert path={+ (#1:#3) arc (#1:#2:#3)}
 }}

 {
\centering

   \begin{tikzpicture}   [scale=.6,   rotate= 40]


\def\R{3} 
\def\angEl{45} 

\filldraw[ball color=white] (0,0) circle (\R);


 \coordinate [label=left:\textcolor{black}{$Q_N^{-1}(w^0)$}] (L) at (3.5, 2.25);

 \coordinate [label=left:\textcolor{black}{$S^{N-1}(\sqrt{N})$}] (X) at
 (3.5, -2);

  \coordinate [label=left:\textcolor{black}{$z_N^0$}] (z0) at
 (0,1.5);


 \coordinate [label=left:\textcolor{black}{${}$}](G) at (-6,.5);

\coordinate [label=left:\textcolor{black}{${}$}] (C) at (5,.5);

\coordinate (F) at (4,2.5);

\coordinate (H) at (-5 ,2.5);

 \draw (G)-- (H);

 \draw (C)--(F);


\shadedraw [color=black, opacity = .5]  (-7,-.25)--(6,-.25)--(4.5,3)--(-5.5,3)--(-7,-.25);


 \draw[->, shorten <=1pt,shorten >=1pt](0,0)--(0, 1.5);


\draw[black,fill=black] (0,0) circle (.5ex);

\draw[black,fill=black] (0,1.5) circle (.5ex);


 \shadedraw [color=black, dashed, opacity=0.3] (0,1.5) ellipse (2.5  and 1);

 \draw[thick] (0,1.5) [partial ellipse=150:370: 2.5 and 1];


 \draw[shorten <=1pt,shorten >=1pt](0,1.5)--(1, 0.5);


 \coordinate [label=left:\textcolor{black}{$a_{z_N^0}$}] (az0) at
 (1.4, 1 );

     \coordinate [label=left:\textcolor{black}{$z_N^0$}] (z0) at
 (0,1.5);


 (6, -3.5);

\end{tikzpicture}

}
\caption{\protect\raggedright The affine subspace $Q_N^{-1}(w^0)$ slices the sphere $S^{N-1}(\sqrt{N})$ in a `circle' with center $z_N^0$ and radius $a_{z_N^0}$.}
\end{figure}

We are interested in integrals of a function $\phi$ over $S_{A_N}$ with respect to the normalized surface area measure $\bar{\sigma}$:
\begin{eqnarray}\label{circint}
\int_{S_{A_N}} \phi(x_1,\ldots,x_k) d \bar{\sigma}(x) = \int_{S^{N-1}(\sqrt{N})\cap Q_N^{-1}(w^0)} \phi(x_1,\ldots, x_k) d \bar{\sigma}(x).
\end{eqnarray}
We will take $\phi$ to be a Borel function that only depends on the first $k$-coordinates for $k<N$. We will also need an important disintegration formula for the integral (\ref{circint}) and to that end we need the following projections.
Let
\begin{equation*}
P_{(k)}: l^2 \to \mathbb{R}^k = X \quad z \mapsto (z_1,\ldots,z_k)
\end{equation*}
be the projection from $l^2$ onto the first $k$-coordinates. Then let $\mathcal{L}$ be the restriction
of $P_{(k)}$ to $\ker Q$:
\begin{eqnarray}\label{cl}
\cl : \ker Q \to X
\end{eqnarray}
This is a surjection provided $\dim \left(\ker(Q) \right)> \dim(X)$. Further we define $\cl_N$ to be the restriction of $P_{(k)}$ to $\ker(Q_N)$:
\begin{eqnarray}\label{clN}
\cl_N : \ker Q_N =\ker Q \cap Z_N \to X
\end{eqnarray}
for large enough $N$, $\cl_{N}$ is also surjective. (See \cite{SenGRL2018} Proposition 6.2). Next we also want to restrict $\cl$ and $\cl_N$ to be isomorphisms. We define $\cl_{0}$ to be the restriction of $\cl$ to $\ker Q \ominus \ker\cl$ which is the orthogonal complement of $\ker Q \cap \ker\cl$ within $\ker Q$:
\begin{eqnarray*}\label{cl0}
\cl_{0}: \ker Q \ominus \ker\cl \to X.
\end{eqnarray*}
This is an isomorphism. Lastly let $\cl_{0,N}$ be the restriction of $\cl_N$ to $\ker Q_N \ominus \ker\cl_N$
$$
\cl_{0,N} : \ker Q_N \ominus \ker\cl_N \to X
$$
for large $N$ this is an isomorphism (See again \cite{SenGRL2018} Prop. 6.2).

Now we state without proof the disintegration formula we will need. See \cite{SenGRL2018} Theorem 3.3 for full detail.

\begin{theorem}\label{T:slicedisint}
Let $f$ be a bounded, or non-negative, Borel function defined on $S_{A_N} = S^{N-1}(\sqrt{N})\cap Q_N^{-1}(w^0)$ for some $w^0\in \mbr^m$.  Let ${z_N^0}$ be the point on  $Q_N^{-1}(w^0)$ closest to $0$. Let $\cl_{0,N}$ and $\cl_N$ be defined as above and let ${x^0}={\cl_N}({z_N^0})\in X$. Then
\begin{equation}\label{E:disintgenslice}
\begin{split}
&\int_{S^{N-1}(\sqrt{N})\cap Q^{-1}(w^0)}f\,d\sigma\\
&=\int_{x\in D_N}\left\{\int_{S_{A_N} \cap {\cl_N}^{-1}(x)}f\,d\sigma\right\}\frac{a_{{z_N^0}} } {\sqrt{a_{{z_N^0}}^2 -  \norm{{\cl_{0,N}}^{-1}(x-{x^0})}^2  }}\,\frac{dx}{|\det {\cl_{0,N}} |},
\end{split}
\end{equation}
where
$a_{{z_N^0}} = \sqrt{N - |z_N^0|^2}$ and $D_N$  consists of all $x\in x^0+{\cl_N}(\ker Q_N)\subset X$ for which the term under the square-root is positive:
\begin{equation}\label{E:defDLx0}
D_N=x^0+\{y\in {\cl_N}(\ker Q_N) :\,    \norm{{\cl_{0,N}}^{-1}(y)} < a_{{z_N^0}}\}.
\end{equation}
\end{theorem}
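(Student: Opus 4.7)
The plan is to reduce the statement to a classical Fubini--type disintegration formula for integrals on a Euclidean sphere, followed by a linear change of variables through $\cl_{0,N}$. The key observation is that $S_{A_N}$ is a round sphere of radius $a_{z_N^0}$ centered at $z_N^0$ and sitting inside the affine subspace $z_N^0+\ker Q_N$; translating by $-z_N^0$ identifies it isometrically with the centered sphere of the same radius in the Euclidean space $\ker Q_N$. Under the orthogonal decomposition
\[
\ker Q_N=\ker\cl_N\oplus\bigl(\ker Q_N\ominus\ker\cl_N\bigr),
\]
each $y\in\ker Q_N$ splits as $y=y_1+y_2$, and by construction $\cl_N(y)=\cl_{0,N}(y_2)$.

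My first step is to invoke the standard spherical Fubini formula for a sphere $S_R\subset V_1\oplus V_2$ of radius $R$ centered at $0$,
\[
\int_{S_R}g\,d\sigma=\int_{\substack{y_2\in V_2\\|y_2|<R}}\left\{\int_{S_{\sqrt{R^2-|y_2|^2}}(V_1)}g(y_1+y_2)\,d\sigma_{y_1}\right\}\frac{R\,dy_2}{\sqrt{R^2-|y_2|^2}},
\]
applied with $R=a_{z_N^0}$, $V_1=\ker\cl_N$, $V_2=\ker Q_N\ominus\ker\cl_N$, and $g(y)=f(z_N^0+y)$. This rewrites the left--hand side of (\ref{E:disintgenslice}) as an iterated integral parameterized by $y_2$ in the open ball of radius $a_{z_N^0}$ in $\ker Q_N\ominus\ker\cl_N$.

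Second, I change variables via the isomorphism $\cl_{0,N}:\ker Q_N\ominus\ker\cl_N\to X$ by setting $x=x^0+\cl_{0,N}(y_2)$. This produces the Jacobian $|\det\cl_{0,N}|^{-1}$, converts $|y_2|^2$ into $\norm{\cl_{0,N}^{-1}(x-x^0)}^2$, and rewrites the admissibility condition $|y_2|<a_{z_N^0}$ as $x\in D_N$. Finally, I identify the inner $y_1$--integral with the fiber integral $\int_{S_{A_N}\cap\cl_N^{-1}(x)}f\,d\sigma$: the reconstructed sphere point $z=z_N^0+y_1+y_2$ satisfies $P_{(k)}(z)=P_{(k)}(z_N^0)+\cl_N(y_2)=x^0+(x-x^0)=x$, so as $y_1$ ranges over the sphere of radius $\sqrt{a_{z_N^0}^2-|y_2|^2}$ in $\ker\cl_N$, the point $z$ traces precisely $S_{A_N}\cap\cl_N^{-1}(x)$, and the $(N-m-k-1)$--dimensional surface measures match under the translation.

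I expect the main obstacle to be the careful verification of the spherical Fubini identity in the orthogonally decomposed form used above; once that co--area computation is in place, the remaining steps (recording that the translation by $-z_N^0$ is an isometry between $S_{A_N}$ and the centered sphere in $\ker Q_N$, performing the linear change of variables, and identifying the fibers of $\cl_N$) are essentially bookkeeping in linear algebra. A minor notational point to keep straight is that ``$\cl_N(z_N^0)$'' must be interpreted as $P_{(k)}(z_N^0)$, since $z_N^0\notin\ker Q_N$ when $w^0\neq 0$.
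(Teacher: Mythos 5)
Your outline is correct. Note that this paper does not actually prove Theorem \ref{T:slicedisint}; it is stated without proof with a citation to [Peterson--Sengupta 2018, Theorem 3.3], and your argument is precisely the standard one that reference carries out: view $S_{A_N}$ as the centered sphere of radius $a_{z_N^0}$ in $\ker Q_N$ after translating by $-z_N^0$, apply the spherical Fubini (co-area) identity with respect to the orthogonal splitting $\ker Q_N=\ker\cl_N\oplus(\ker Q_N\ominus\ker\cl_N)$, and then push the $y_2$-variable forward through the isomorphism $\cl_{0,N}$ to produce the Jacobian $|\det\cl_{0,N}|^{-1}$ and the domain $D_N$. Your dimension bookkeeping (fibers of dimension $N-m-k-1=d-k-m$) is consistent with the paper's subsequent use of the formula in (\ref{E:volsphere}), and you are right that $\cl_N(z_N^0)$ and $\cl_N^{-1}(x)$ must be read as $P_{(k)}(z_N^0)$ and $P_{(k)}^{-1}(x)$ since $z_N^0\notin\ker Q_N$ in general; the only piece left to write out in full is the co-area identity itself, which is a routine computation.
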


Taking $\phi$ to be a Borel function on $\mbr^k$ we let $f$ be the function obtained by extending $\phi$ to $\mbr^N$ by setting
$$
f(x) = \phi(x_1,\ldots, x_k) \quad \mbox{for all } x \in \mbr^N
$$
and denote $S^{N-1}(\sqrt{N})\cap Q^{-1}(w^0)\cap {\cl_N}^{-1}(x) = S_{A_N} \cap {\cl_N}^{-1}(x)$
then the disintegration formula (\ref{E:disintgenslice}) for this particular $f$ is:

\begin{equation}
\begin{split}
&\int_{S^{N-1}(\sqrt{N})\cap Q^{-1}(w^0)}f\,d\sigma\\
&=\int_{x\in D_N}\left\{\int_{S_{A_N}\cap {\cl_N}^{-1}(x)}f\,d\sigma\right\}\frac{a_{{z_N^0}} } {\sqrt{a_{{z_N^0}}^2 -  \norm{{\cl_{0,N}}^{-1}(x-{x^0})}^2  }}\,\frac{dx}{|\det {\cl_{0,N}} |} \\
&=\int_{x\in D_N}\left\{{\rm Vol}(S_{A_N}\cap {\cl_N}^{-1}(x) )\right\}\frac{a_{{z_N^0}} } {\sqrt{a_{{z_N^0}}^2 -  \norm{{\cl_{0,N}}^{-1}(x-{x^0})}^2  }}\,\frac{dx}{|\det {\cl_{0,N}} |}.
\end{split}
\end{equation}
Let $d= N-1$ then the volume of the sphere is:
\begin{equation}\label{E:volsphere} {\rm Vol}\left(S_{A_N}\cap {\cl_N}^{-1}(x) \right)
= c_{d-k-m} \left[a_{{{z_N^0}}}^2- \norm{{\cl_{0,N}}^{-1}(x-{x^0})}^2 \right]^{\frac{d-k-m}{2}}
\end{equation}
where $c_{d-k-m}$ is the surface measure of the $(d-k-m)$-dimensional sphere given, for all $j$, by the formula:
\begin{equation}\label{E:cjsurf}
c_{j}= 2\frac{\pi^{\frac{j+1}{2}}}{\Gamma\left(\frac{j+1}{2}\right)}.
\end{equation}
We can then rewrite (\ref{E:volsphere}) as
\begin{equation}\label{E:disintgenslice3b}
\begin{split}
 \int_{S^{N-1}(\sqrt{N})\cap Q^{-1}(w^0)}f\,d\sigma
&=c_{d-k-m}\int_{x\in D_N}
I_N(x)\,\frac{dx}{|\det {\cl_{0,N}} |},
\end{split}
\end{equation}
where
\begin{equation}\label{E:sliceIN}
I_N(x)= \phi(x)a_{z_N^0}    \left[a_{{{z_N^0}}}^2- \norm{{\cl_{0,N}}^{-1}(x-{x^0})}^2 \right]^{\frac{d-k-m-1}{2}}.
\end{equation}
The sphere $S^{N-1}(\sqrt{N})\cap Q^{-1}(w^0)$ has dimension $d-m$ and its volume is
$$c_{d-m}a_{z_N^0}^{d-m}.$$
So, using the normalized surface measure $\ovs$ on the sphere $S^{N-1}(\sqrt{N})\cap Q^{-1}(w^0)$, we have
\begin{equation}\label{E:disintgenslice3c}
\begin{split}
 \int_{S^{N-1}(\sqrt{N})\cap Q^{-1}(w^0)}f\,d\ovs
&=\frac{c_{d-k-m}}{c_{d-m}a_{z_N^0}^{d-m} } \int_{x\in D_N}
I_N(x)\,\frac{dx}{|\det {\cl_{0,N}} |},
\end{split}
\end{equation}
where $I_N(x)$ is as in (\ref{E:sliceIN}).

\subsection{Related literature}\label{ss:rl}  This paper is a generalization of work done in \cite{SenGRL2018} where more detailed results are proved for a bounded Borel function $\phi$.

The connection between Gaussian measure and the uniform measure on high dimensional spheres appeared originally in the works of Maxwell  \cite{MaxGas1860}  and Boltzmann  \cite[pages 549-553]{BoltzStud1868}. Later works included Wiener's paper \cite{WienDS1923} on ``differential space'', L\'evy \cite{LevyPLAF1922}, McKean \cite{McKGDS1973}, and Hida \cite{HidaStat1970}. The work of Mehler \cite{Mehler1866} is one example illustrating the classical interest in functions on high-dimensional spheres.

For the theory of Gaussian measures in infinite dimensions we refer to the monographs of Bogachev \cite{BogGM1998} and Kuo \cite{KuoGB1975}.

This paper is the fourth in a series of papers. The first \cite{HolSenGR2012} develops the Gaussian Radon transform  for Banach spaces, where a support theorem was established. The second \cite{SenGRL2016} establishes the result for hyperplanes and the third \cite{SenGRL2018} proves the result for the case of affine planes.

\section{Limiting Results}

In this section we review our previous results and prove the main result of this paper.

\subsection{Previous Results}
From the previous paper \cite{SenGRL2018} the main result was the following theorem:

 \begin{theorem}\label{T:RadnonNlim} Let $A$ be a finite-codimension closed affine subspace in $l^2$, specified by (\ref{Aaslevelset}). Let $k$ be a positive integer; suppose that the image of $A$ under the coordinate projection $l^2\to\mbr^k:z\mapsto z_{(k)}=(z_1,\ldots, z_k)$ is all of $\mbr^k$.  Let $\phi$ be a bounded Borel function on $\mbr^k$. Then
 \begin{equation}\label{E:limRfS}
 \lim_{N\to\infty}\int_{S_{A_N}} \phi(x_1,\ldots, x_k)\,d\ovs(x_1,\ldots, x_N)=\int_{\mbr^\infty}\phi(z_{(k)})  \,d\mu(z),  \end{equation}
 where $\ovs$ is the normalized surface area measure on $S_{A_N}$, and $\mu $ is the probability measure on $\mbr^\infty$ specified by the characteristic function
  \begin{equation}\label{E:cphi}
  \begin{split}
\int_{\mbr^\infty} \exp\left({i\la t, x\ra}\right)\,d\mu (x) &=\exp\left({i\la t, p_A  \ra-\frac{1}{2}\norm{P_0t}^2}\right)\qquad   \hbox{for all $t\in\mbr^\infty_0$,}
\end{split}
\end{equation}
where $p_A$ is the point on $A$ closest to the origin and $P_0$ is the orthogonal projection in $l^2$ onto the subspace $A-p_A$. \end{theorem}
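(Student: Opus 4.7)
The natural starting point is the disintegration identity (\ref{E:disintgenslice3c}), and the strategy is to take the limit $N\to\infty$ term by term inside that integral. I first factor $a_{z_N^0}^{d-k-m-1}$ out of $I_N(x)$ so the integrand on the right-hand side of (\ref{E:disintgenslice3c}) becomes
\begin{equation*}
\frac{c_{d-k-m}}{c_{d-m}}\,a_{z_N^0}^{-k}\,\phi(x)\left[1-\frac{\norm{\cl_{0,N}^{-1}(x-x^0)}^2}{a_{z_N^0}^2}\right]^{(d-k-m-1)/2}\frac{dx}{|\det \cl_{0,N}|},
\end{equation*}
and then analyze the three $N$-dependent pieces separately.

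Using $c_j=2\pi^{(j+1)/2}/\Gamma((j+1)/2)$ and $d=N-1$, Stirling's formula gives $\Gamma((N-m)/2)/\Gamma((N-k-m)/2)\sim(N/2)^{k/2}$, hence $c_{d-k-m}/c_{d-m}\sim(N/(2\pi))^{k/2}$; combined with $a_{z_N^0}^2=N-|z_N^0|^2\sim N$ (since $|z_N^0|\to |p_A|$ by (\ref{limitz})) this yields $(c_{d-k-m}/c_{d-m})\,a_{z_N^0}^{-k}\to(2\pi)^{-k/2}$. Because $(d-k-m-1)/2\sim N/2$ and $a_{z_N^0}^2\sim N$, the elementary limit $(1-c_N/N)^{N/2}\to e^{-c/2}$ whenever $c_N\to c$, together with the convergences $x^0_N\to P_{(k)}(p_A)=:\tilde x^0$ and $\cl_{0,N}^{-1}\to \cl_0^{-1}$ guaranteed by (\ref{limitz}) and Proposition 6.2 of \cite{SenGRL2018}, gives pointwise on $X$
\begin{equation*}
\left[1-\frac{\norm{\cl_{0,N}^{-1}(x-x^0)}^2}{a_{z_N^0}^2}\right]^{(d-k-m-1)/2}\longrightarrow \exp\bigl(-\tfrac{1}{2}\norm{\cl_0^{-1}(x-\tilde x^0)}^2\bigr),
\end{equation*}
while $|\det\cl_{0,N}|\to|\det\cl_0|$ and $D_N\uparrow X$. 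The elementary bound $(1-u)^{n}\le e^{-nu}$ on $[0,1)$ combined with the uniform lower bound on the smallest singular value of $\cl_{0,N}^{-1}$ (eventually close to that of $\cl_0^{-1}$) produces a Gaussian-integrable envelope independent of $N$, so boundedness of $\phi$ legitimates dominated convergence and gives
\begin{equation*}
\lim_{N\to\infty}\int_{S_{A_N}}\phi\,d\ovs=\frac{1}{(2\pi)^{k/2}|\det\cl_0|}\int_X\phi(x)\exp\bigl(-\tfrac{1}{2}\norm{\cl_0^{-1}(x-\tilde x^0)}^2\bigr)\,dx.
\end{equation*}

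To identify the right-hand side with (\ref{E:limRfS})--(\ref{E:cphi}), I compute the pushforward $(P_{(k)})_*\mu$. For $t\in\mbr^k$, embedding $t'=(t,0,0,\ldots)$, the characteristic function (\ref{E:cphi}) evaluated at $t'$ equals $\exp(i\la t,P_{(k)}p_A\ra-\tfrac{1}{2}\|P_0t'\|^2)$, so $(P_{(k)})_*\mu$ is Gaussian on $\mbr^k$ with mean $P_{(k)}p_A=\tilde x^0$ and covariance $C=P_{(k)}P_0P_{(k)}^*$. Using the orthogonal decomposition $\ker Q=\ker\cl\oplus(\ker Q\ominus\ker\cl)$, the vanishing of $\cl$ on $\ker\cl$, and $\cl_0=P_{(k)}\circ\iota$ with $\iota$ the inclusion, one checks $C=\cl_0\cl_0^*$, whence $\det C=|\det\cl_0|^2$ and the quadratic form $(\cl_0^{-1})^*\cl_0^{-1}$ equals $C^{-1}$, matching the explicit Gaussian density derived above. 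The step I expect to demand the most care is supplying a single integrable envelope for dominated convergence on the expanding sets $D_N$, controlling both the varying operators $\cl_{0,N}^{-1}$ and the centers $x^0_N$ simultaneously; it is precisely the sharpening of this step that permits the extension from bounded $\phi$ to $L^p$ which is the main contribution of the present paper.
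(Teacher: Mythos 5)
Your proposal follows essentially the same route as the paper: the disintegration formula (\ref{E:disintgenslice3c}), the separate limits (\ref{E:constlim}), (\ref{E:L0Ndet}), (\ref{E:limN}) justified by dominated convergence with a Gaussian envelope (the paper's Lemma \ref{L:inequ} supplies exactly the lower bound $\norm{\cl_{0,N}^{-1}(v)}\geq\norm{\cl_0^{-1}(v)}$ that you obtain via singular values), and the identification of the limiting density with ${\pi_{(k)}}_*\mu$ through characteristic functions. The argument is correct and no further comparison is needed.
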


We give a sketch of the proof here for full detail refer to \cite{SenGRL2018} Theorem 2.1.

 The pushforward measure ${\pi_{(k)}}_*\mu$ of $\mu$ to $\mbr^k$ is
 \begin{equation}
{ \pi_{(k)}}_*\mu(S)=\mu\bigl(\pi_{(k)}^{-1}(S)\bigr),\qquad\hbox{for all Borel $S\subset\mbr^k$,}
 \end{equation}
 where
 $$\pi_{(k)}:\mbr^\infty\to\mbr^k:z\mapsto z_{(k)}=(z_1,\ldots, z_k)$$
  is the projection on the first $k$ coordinates. Now define $\mu_\infty$ on $\mbr^k$ by
   \begin{equation}\label{E:muinfty2}
 d\mu_\infty(x)= (2\pi)^{-k/2} \exp\left({-\frac{1}{2}\la  (\cl_0\cl_0^*)^{-1}(x-z^0_{(k)}), x-z^0_{(k)} \ra }\right) \,\frac{dx}{|\det \cl_{0}|  },
 \end{equation}
 where $\cl_0$ is given in (\ref{cl0}) and $z^0_{(k)}$ is the first $k$-coordinates of $z^0$, the point on $A = Q^{-1}(w_0)$ closest to the origin.

From their respective characteristic functions we can deduce that
 $$
  \pi_{(k)} \star \mu(S) = \mu_\infty
  $$

Now, using this and Theorem \ref{T:limintfsig} below, we can conclude that

\begin{equation}\label{E:mainresu1}
\begin{split}
&\lim_{N\to\infty} \int_{S^{N-1}(\sqrt{N})\cap Q_N^{-1}(w^0)}\phi(x_1,\ldots, x_k)\,d{\bar\sigma}(x_1,\ldots, x_N) \\
&=   \int_{\mbr^k} \phi \,d \mu_{\infty}\\
&= \int_{\mbr^k} \phi \,d {\pi_{(k)}}_*\mu = \int_{\mbr^\infty} \phi\circ{\pi_{(k)}} \,d\mu.
\end{split}
\end{equation}

For the first equality in (\ref{E:mainresu1}) we need the following theorem (from \cite{SenGRL2018} Theorem 4.1).

\begin{theorem}\label{T:limintfsig} Let $A$ be an affine subspace of $l^2$ given by $Q^{-1}(w^0)$, where $Q:l^2\to \mbr^m$ is a continuous linear surjection.
 Suppose that the projection  $P_{(k)}: l^2\to \mbr^k: z\mapsto z_{(k)}$  maps $\ker Q$ onto $\mbr^k$. Let $S^{N-1}(\sqrt{N})$ be the sphere of radius $\sqrt{N}$ in the subspace $\mbr^N\oplus\{0\}$ in $l^2$.  Let $\phi$ be a bounded Borel function on $\mbr^k$ and let $f$ be the function obtained by extending $\phi$ to $l^2$ by setting
\begin{equation}\label{E:fphi}
f(x)=\phi(x_1,\ldots, x_k)\qquad\hbox{for all $x\in l^2$.}
\end{equation}
     Then
   \begin{equation}\label{E:disintgenslice0}
\begin{split}
&\lim_{N\to\infty} \int_{S_{Z_{N}}(\sqrt{N})\cap Q_N^{-1}(w^0)}f\,d{\bar\sigma}\\
&=(2\pi)^{-k/2} \int_{x\in \mbr^k}
\phi(x) \exp\left({- \frac{  \la ({\cl_0}{\cl_0}^*)^{-1}(x-{z^{0}}_{(k)}), x-{z^{0}}_{(k)} \ra}{2} }\right)\,  \frac{dx}{ |\det\cl_0|},
\end{split}
\end{equation}
where ${\cl_0}$ is the restriction of the  projection $P_{(k)}$ to $\ker Q\ominus \ker  P_{(k)}$, and $z^0$ is the point on  $ Q^{-1}(w^0)$ closest to the origin.
           \end{theorem}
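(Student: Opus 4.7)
The plan is to start from the disintegration identity (\ref{E:disintgenslice3c}) with $f=\phi\circ P_{(k)}$, substitute the explicit form of $I_N$ from (\ref{E:sliceIN}), and then perform the large-$N$ asymptotic analysis factor by factor. First, I would pull $a_{z_N^0}^{d-k-m-1}$ out of the bracket $\bigl[a_{z_N^0}^2-\norm{\cl_{0,N}^{-1}(x-x^0)}^2\bigr]^{(d-k-m-1)/2}$; combining with the extra $a_{z_N^0}$ inside $I_N$ and the $a_{z_N^0}^{-(d-m)}$ sitting out front leaves an overall scalar $c_{d-k-m}/(c_{d-m}\,a_{z_N^0}^{k})$ in front of the integral of
\[
\phi(x)\,\Bigl[1-\norm{\cl_{0,N}^{-1}(x-x^0)}^2/a_{z_N^0}^2\Bigr]^{(d-k-m-1)/2}\,\frac{dx}{|\det\cl_{0,N}|}.
\]

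Second, I would identify the limit of the scalar prefactor. By (\ref{E:cjsurf}),
\[
\frac{c_{d-k-m}}{c_{d-m}}=\pi^{-k/2}\,\frac{\Gamma\!\left(\frac{d-m+1}{2}\right)}{\Gamma\!\left(\frac{d-k-m+1}{2}\right)}.
\]
The standard asymptotic $\Gamma(n+k/2)/\Gamma(n)\sim n^{k/2}$, applied at $n=(d-k-m+1)/2=\Theta(N)$, together with $a_{z_N^0}^{k}=(N-|z_N^0|^2)^{k/2}\sim N^{k/2}$, delivers the clean limit $(2\pi)^{-k/2}$.

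Third, I need pointwise convergence of the integrand on $\mbr^k$. The weak convergence $z_N^0\to z^0$ recorded in (\ref{limitz}) gives $x^0=\cl_N(z_N^0)\to\cl(z^0)=z^0_{(k)}$, and the convergence results from \cite{SenGRL2018} quoted after (\ref{clN}) provide $\cl_{0,N}\to\cl_0$, hence $\cl_{0,N}^{-1}(x-x^0)\to\cl_0^{-1}(x-z^0_{(k)})$ and $|\det\cl_{0,N}|\to|\det\cl_0|$. The elementary limit $(1-c_N/N)^{N/2}\to e^{-c/2}$ (when $c_N\to c$) then converts the bracket to $\exp\bigl(-\frac{1}{2}\norm{\cl_0^{-1}(x-z^0_{(k)})}^2\bigr)$, and rewriting that norm through the isomorphism $\cl_0$ as $\la(\cl_0\cl_0^*)^{-1}(x-z^0_{(k)}),\,x-z^0_{(k)}\ra$ matches the right-hand side of (\ref{E:disintgenslice0}) exactly.

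Finally, to justify moving the limit through the integral I would extend the integrand by zero off $D_N$ (which exhausts $\mbr^k$ in the limit) and invoke dominated convergence. A uniform majorant comes from the inequality $(1-s)^{(d-k-m-1)/2}\le\exp\bigl(-\frac{d-k-m-1}{2}s\bigr)$ applied with $s=\norm{\cl_{0,N}^{-1}(x-x^0)}^2/a_{z_N^0}^2$, and the boundedness of $\phi$ absorbs the remaining factor. The main technical point is precisely this domination: one must verify that for all sufficiently large $N$ the quadratic form in the exponent of the majorant is bounded below by a fixed positive-definite quadratic form in $x$, so that the Gaussian bound is integrable independently of $N$; this is where the weak convergence of $\cl_{0,N}^{-1}$ to $\cl_0^{-1}$ is essential, and it is the step I expect to require the most care.
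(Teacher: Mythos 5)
Your outline reproduces the paper's own argument essentially step for step: the reduction via the disintegration formula (\ref{E:disintgenslice3c}) to a scalar prefactor times the integral of $I_N$, the Gamma-asymptotic limit (\ref{E:constlim}) of that prefactor, the pointwise limits (\ref{E:L0Ndet}) and (\ref{E:limN}) of the determinant and of the bracket, and dominated convergence to finish; all of these computations check out. The one point of substance where you diverge is the construction of the $N$-independent majorant, which you correctly single out as the delicate step but do not carry out. The paper does not obtain it from convergence of $\cl_{0,N}^{-1}$ to $\cl_0^{-1}$: it uses the monotonicity statement of Lemma \ref{L:inequ}. Because $\ker Q_N=\ker Q\cap\mbr^N\subseteq\ker Q$, the fibre $\cl_N^{-1}(y)$ is contained in $\cl^{-1}(y)$, so the minimal-norm preimages satisfy $\norm{\cl_{0,N}^{-1}(y)}\geq\norm{\cl_0^{-1}(y)}$ for \emph{every} $N$ and every $y$, with no limiting process at all; combined with the elementary bound $(1-y/N)^{N-k-m-2}e^{y}\leq e^{k+m+2}$ this yields the uniform Gaussian bound $e^{(k+m+2)/2}e^{-\norm{\cl_0^{-1}(x-z^{0,N}_{(k)})}^2/2}$ at once, and the only convergence then needed is that of the finite-dimensional centers $z^{0,N}_{(k)}\to z^0_{(k)}$, absorbed by an $\epsilon$-shift in the exponent. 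Your convergence-based route can be made to work in this finite-dimensional setting (the $k\times k$ matrices $\cl_{0,N}\cl_{0,N}^*$ converge to the invertible $\cl_0\cl_0^*$, so the quadratic forms are eventually bounded below by $1-\epsilon$ times the limit form), but it requires importing that matrix convergence from \cite{SenGRL2018}, whereas the inclusion $\ker Q_N\subseteq\ker Q$ gives the domination for free and in the sharper, all-$N$ form that the paper then reuses verbatim for the $L^p$ generalization in Theorem \ref{T:limintfsig2}.
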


Note that in order to extend  Theorem \ref{T:RadnonNlim} for a more general function $\phi$ we only need to extend Theorem \ref{T:limintfsig}.

Again we give a sketch of the proof for Theorem \ref{T:limintfsig} . Let $a = \sqrt{N}$ and $d = N-1$. From the disintegration formula above  (\ref{E:disintgenslice3c}) we have
\begin{equation}\label{E:disintgenslice17}
 \lim_{N\to\infty} \int_{S^{N-1}(\sqrt{N})\cap Q_N^{-1}(w^0)}f\,d{\bar\sigma}\\
 =  \lim_{N\to\infty}  \frac{  c_{d-k-m}}{a_{{z^{0,N}}}^{k}c_{d-m}} \int_{\mbr^k}
I_N\,\frac{dx}{|\det {\cl_{0,N}} |},
\end{equation}
where
\begin{equation}\label{E:IN3}
I_N=  \phi(x)
  \left\{	1 - a_{{z^{0,N}}}^{-2} \norm{{{\cl_{0,N}} }^{-1}(x-{z^{0,N}}_{(k)})}^2  \right\}^{\frac{d-k-m-1}{2}}1_{D_N}(x).\end{equation}
We state here the limits of the constant term outside the integral  (in (\ref{E:disintgenslice17})), as well as those of the full integrand on the right hand side, including the determinant term without proof, for full detail refer to \cite{SenGRL2018}:

\begin{equation}\label{E:constlim}
  \lim_{N \to \infty}\frac{ c_{d-k-m}}{a_{{z^{0,N}}}^{k}c_{d-m}} = (2\pi)^{-k/2},
\end{equation}

\begin{equation}\label{E:L0Ndet}
\lim_{N\to\infty} |\det {\cl_{0,N}} | =  \lim_{N\to\infty} \det({\cl_{0,N}}{\cl_{0,N}^*})  =  \det({\cl_0}{\cl_0^*}) = |\det{\cl_0}|,
\end{equation}
and
\begin{equation}\label{E:limN}
\begin{split}
\lim_{N \to \infty}\left\{1 - a_{{z^{0,N}}}^{-2} \norm{{{\cl_{0,N}} }^{-1}(x-{z^{0,N}}_{(k)})}^2  \right\}^{\frac{d-k-m-1}{2}}1_{D_N}(x) \\
 & \hskip -2 in   =\lim_{N\to\infty}  \left\{	1 - a_{{z^{0,N}}}^{-2} \norm{{{\cl_{0,N}} }^{-1}(x-{z^{0,N}}_{(k)})}^2  \right\}^{\frac{N}{2}} 1_{D_N}(x)\\
& \hskip -2 in =\exp\left(-\frac{1}{2}  \la ({\cl_0}{\cl_0}^*)^{-1}(x-z^0_{(k)}), x-z^0_{(k)}\ra\right).
 \end{split}
\end{equation}

Therefore if $\phi$ is such that we can apply dominated convergence theorem in (\ref{E:disintgenslice17}) we have,
\begin{align}\label{E:disintgenslice6}
 \lim_{N\to\infty} \int_{S^{N-1}(\sqrt{N})\cap Q_N^{-1}(w^0)}f\,d{\bar\sigma}
&=  \lim_{N\to\infty}  \frac{  c_{d-k-m}}{a_{{z^{0,N}}}^{k}c_{d-m}} \int_{\mbr^k}
I_N\,\frac{dx}{|\det {\cl_{0,N}} |} \\
& \hskip -1.5 in = (2\pi)^{-k/2} \int_{\mbr^k}\lim_{N\to\infty}
I_N\,\frac{dx}{|\det {\cl_{0,N}} |} \label{domconv}\\
&\hskip -1.5 in = (2\pi)^{-k/2}\int_{\mbr^k}\phi(x) \exp \left({-\frac{1}{2}\la  ({\cl_0}{\cl_0}^*)^{-1}(x-z^0_{(k)}), x-z^0_{(k)} \ra }\right) \,\frac{dx}{|\det{\cl_0}|  }\\
&\hskip -1.5 in =(2\pi)^{-k/2}\int_{\mbr^k}\phi(x) \exp \left({-\frac{1}{2}\norm{{\cl_0}^{-1}(x-z^0_{(k)})} }^2\right) \,\frac{dx}{|\det{\cl_0}|  }
\end{align}

which is the result in Theorem \ref{T:limintfsig}.

\subsection{The Main Result}
We turn now to the main result of this paper, an extension of the previous result Theorem \ref{T:RadnonNlim} to more general functions. We will show that if $\phi$ is a Borel function on $\mbr^k$ which is $L^p$, $p>1$, with respect to the Gaussian measure with density proportional to
$$e^{-\norm{\cl^{-1}_{0}(x-z^{0}_{(k)})}^2/2}\,dx,$$
then the conclusion of $\ref{T:RadnonNlim}$ still holds. To this end we state and prove a generalization of Theorem \ref{T:limintfsig}.

\begin{theorem}\label{T:limintfsig2} Let $A$ be an  affine subspace of $l^2$ given by $Q^{-1}(w^0)$, where $Q:l^2\to \mbr^m$ is a continuous linear surjection.  Suppose that the projection  $P_{(k)}: l^2\to \mbr^k: z\mapsto z_{(k)}$  maps $\ker Q$ onto $\mbr^k$. Let $S^{N-1}(\sqrt{N})$ be the sphere of radius $\sqrt{N}$ in the subspace $\mbr^N\oplus\{0\}$ in $l^2$.  Let $\phi$ be a Borel function on $\mbr^k$ which is in $L^p$ with respect to the Gaussian measure with density proportional to
$$e^{-\norm{\cl^{-1}_{0}(x-z^{0}_{(k)})}^2/2}\,dx,$$
for some $p>1$, and let $f$ be the function obtained by extending $\phi$ to $l^2$ by setting
\begin{equation}\label{E:fphi2}
f(x)=\phi(x_1,\ldots, x_k)\qquad\hbox{for all $x\in l^2$.}
\end{equation}
     Then
   \begin{equation}\label{E:disintgenslice02}
\begin{split}
&\lim_{N\to\infty} \int_{S^{N-1}(\sqrt{N})\cap Q_N^{-1}(w^0)}f\,d{\bar\sigma}\\
&=(2\pi)^{-k/2} \int_{x\in \mbr^k}
\phi(x) \exp\left({- \frac{  \la ({\cl_0}{\cl_0}^*)^{-1}(x-{z^{0}}_{(k)}), x-{z^{0}}_{(k)} \ra}{2} }\right)\,  \frac{dx}{ |\det\cl_0|},
\end{split}
\end{equation}
where ${\cl_0}$ is the restriction of the  projection $P_{(k)}$ to $\ker Q\ominus \ker  P_{(k)}$, and $z^0$ is the point on  $ Q^{-1}(w^0)$ closest to the origin.
\end{theorem}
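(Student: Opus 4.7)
The plan is to reduce to the bounded case already covered by Theorem \ref{T:limintfsig} via a truncation argument, and to control the truncation error by a uniform $L^{p'}$-estimate on the slices for some $p'\in(1,p)$. For $M>0$ put $\phi_M=\phi\cdot 1_{\{|\phi|\leq M\}}$, extend it to $f_M$ on $\mbr^N$ by (\ref{E:fphi2}), and denote by $I_M$ the right-hand side of (\ref{E:disintgenslice02}) with $\phi$ replaced by $\phi_M$. Since $\phi\in L^p$ with $p>1$ and the target Gaussian is a probability measure, $\phi\in L^1$; dominated convergence gives $I_M\to I$ as $M\to\infty$, where $I$ is the right-hand side of (\ref{E:disintgenslice02}). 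Because $\phi_M$ is bounded Borel, Theorem \ref{T:limintfsig} yields $\int_{S_{A_N}}f_M\,d\ovs\to I_M$ for each fixed $M$. The triangle inequality
\begin{equation*}
\left|\int_{S_{A_N}}f\,d\ovs - I\right|\leq\int_{S_{A_N}}|\phi|\,1_{\{|\phi|>M\}}\,d\ovs+\left|\int_{S_{A_N}}f_M\,d\ovs - I_M\right|+|I_M-I|
\end{equation*}
thus reduces the theorem to showing that the first term tends to $0$ as $M\to\infty$ uniformly in large $N$.

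The main step will be the uniform bound $\sup_{N\geq N_0}\int_{S_{A_N}}|\phi|^{p'}\,d\ovs<\infty$ for some $p'\in(1,p)$. Applying (\ref{E:disintgenslice3c}) to the non-negative function $|\phi|^{p'}$ and using $(1-s)^A\leq e^{-As}$ for $s\in[0,1]$, $A\geq 0$, one obtains
\begin{equation*}
\int_{S_{A_N}}|\phi|^{p'}\,d\ovs\leq\frac{c_{d-k-m}}{c_{d-m}a_{z_N^0}^{k}}\int_{\mbr^k}|\phi(x)|^{p'}\exp\!\left(-\frac{d-k-m-1}{2a_{z_N^0}^2}\norm{\cl_{0,N}^{-1}(x-z^{0,N}_{(k)})}^2\right)\frac{dx}{|\det\cl_{0,N}|}.
\end{equation*}
Since $(d-k-m-1)/(2a_{z_N^0}^2)\to 1/2$, the prefactors converge by (\ref{E:constlim}) and (\ref{E:L0Ndet}), and $\cl_{0,N}^{-1}\to\cl_0^{-1}$ together with $z^{0,N}_{(k)}\to z^0_{(k)}$ (see (\ref{limitz}) and \cite{SenGRL2018} Proposition 6.2), a routine quadratic-form perturbation then shows that for any prescribed $\delta>0$ there exist $N_0$ and $C_\delta$ with
\begin{equation*}
\int_{S_{A_N}}|\phi|^{p'}\,d\ovs\leq C_\delta\int_{\mbr^k}|\phi(x)|^{p'}e^{-(1-\delta)\norm{\cl_0^{-1}(x-z^0_{(k)})}^2/2}\,dx\quad\text{for all } N\geq N_0.
\end{equation*}

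With $p'\in(1,p)$ fixed and $\delta\in(0,(p-p')/p)$, H\"older's inequality with conjugate exponents $r=p/p'$ and $r'=p/(p-p')$ will give
\begin{align*}
&\int_{\mbr^k}|\phi|^{p'}e^{-(1-\delta)\norm{\cl_0^{-1}(x-z^0_{(k)})}^2/2}\,dx\\
&\quad\leq\left(\int|\phi|^p e^{-\norm{\cl_0^{-1}(x-z^0_{(k)})}^2/2}\,dx\right)^{p'/p}\left(\int e^{r'(p'/p-(1-\delta))\norm{\cl_0^{-1}(x-z^0_{(k)})}^2/2}\,dx\right)^{1/r'},
\end{align*}
in which the exponent $r'(p'/p-(1-\delta))/2<0$ by the choice of $\delta$, so the first factor is finite by hypothesis and the second as a Gaussian integral on $\mbr^k$. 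Markov's inequality then produces
\begin{equation*}
\int_{S_{A_N}}|\phi|\,1_{\{|\phi|>M\}}\,d\ovs\leq\frac{1}{M^{p'-1}}\int_{S_{A_N}}|\phi|^{p'}\,d\ovs\leq\frac{C}{M^{p'-1}}\to 0
\end{equation*}
as $M\to\infty$, uniformly in $N\geq N_0$, which closes the argument.

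The main technical obstacle will be establishing the pointwise-and-uniform comparison $(1-a_{z_N^0}^{-2}\norm{\cl_{0,N}^{-1}(x-z^{0,N}_{(k)})}^2)^{(d-k-m-1)/2}\leq C_\delta e^{-(1-\delta)\norm{\cl_0^{-1}(x-z^0_{(k)})}^2/2}$ at finite $N$: only a coefficient $1-\delta<1$ (rather than the limiting $1$) can be placed on the exponent for all $N\geq N_0$, which is exactly why uniform $L^p$-control on the slices with the sharp weight is unavailable and the H\"older trade-off between the weight loss $\delta>0$ and the integrability loss $p-p'>0$ (possible precisely because $p>1$) becomes essential.
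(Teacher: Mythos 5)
Your proof is correct, but it takes a genuinely different route from the paper's. The paper proves the theorem by direct dominated convergence on the disintegration integrand $I_N$: it establishes the pointwise bound $\left(1-\tfrac{y}{N}\right)^{N-k-m-2}\leq e^{k+m+2}e^{-y}$ by locating the maximum of $\left(1-\tfrac{y}{N}\right)^{N-k-m-2}e^{y}$, then invokes a separate Lemma (their Lemma \ref{L:inequ}) giving the exact monotone inequality $\norm{\cl_{0,N}^{-1}(v)}\geq\norm{\cl_{0}^{-1}(v)}$ (valid because $\ker Q_N\subset\ker Q$, so the minimal-norm preimage over the larger kernel is no longer), handles only the shift $z^{0,N}_{(k)}\to z^{0}_{(k)}$ by an $\epsilon$-perturbation (which is uniform in $x$ since the operator is already $N$-independent at that stage), and finally shows the resulting dominating function $|\phi(x)|e^{\epsilon\norm{a(x)}}e^{-\norm{a(x)}^2/2}$ is integrable by H\"older with exponents $p,q$. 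You instead truncate, apply the bounded-case Theorem \ref{T:limintfsig} to $\phi_M$, and control the tail uniformly in $N$ via a uniform $L^{p'}$ bound on the slices plus Markov; the $p>1$ hypothesis enters through the same kind of H\"older trade-off against a slightly deficient Gaussian weight. Both arguments are sound and hinge on essentially the same Gaussian domination estimate. Your approach buys modularity (the bounded theorem is used as a black box, and the uniform slice bound $\sup_{N\geq N_0}\int_{S_{A_N}}|\phi|^{p'}\,d\ovs<\infty$ is of independent interest); the paper's buys a shorter, single-pass argument and a sharp weight (no $\delta$-loss in the quadratic term, only a linear $\epsilon\norm{a(x)}$ correction). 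One point you should make explicit: your ``routine quadratic-form perturbation'' replacing $\norm{\cl_{0,N}^{-1}(x-z^{0,N}_{(k)})}^2$ by $(1-\delta)\norm{\cl_{0}^{-1}(x-z^{0}_{(k)})}^2-C_\delta$ uniformly in $x$ requires operator-norm convergence $\cl_{0,N}^{-1}\to\cl_{0}^{-1}$ (the error grows linearly in $\norm{x}$ otherwise), whereas the paper sidesteps this by using the exact inequality of Lemma \ref{L:inequ} first; either cite that convergence from \cite{SenGRL2018} Proposition 6.2 or, more simply, borrow Lemma \ref{L:inequ} so that only the bounded shift $z^{0,N}_{(k)}-z^{0}_{(k)}$ needs perturbing.
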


\begin{proof}
Utilizing the proof from Theorem \ref{T:limintfsig} we need only show (\ref{domconv}) still holds, that is,
\begin{eqnarray}\label{e:domconv2}
\lim_{N\to\infty}  \frac{  c_{d-k-m}}{a_{{z^{0,N}}}^{k}c_{d-m}} \int_{\mbr^k}
I_N\,\frac{dx}{|\det {\cl_{0,N}} |}
= (2\pi)^{-k/2} \int_{\mbr^k}\lim_{N\to\infty}
I_N\,\frac{dx}{|\det {\cl_{0,N}} |}
\end{eqnarray}
where
$I_N = \phi(x)
  \left\{	1 - a_{{z^{0,N}}}^{-2} \norm{{{\cl_{0,N}} }^{-1}(x-{z^{0,N}}_{(k)})}^2  \right\}^{\frac{d-k-m-1}{2}}1_{D_N}(x)$
  and $D_N$ is all $x \in \mbr^k$ such that the square-root term is positive.

First we have the following inequality,
\begin{equation}
 \begin{split}
& \left\{	1 - a_{{z^{0,N}}}^{-2} \norm{{{\cl_{0,N}} }^{-1}(x-{z^{0,N}}_{(k)})}^2  \right\}^{d-k-m-1} \\
 &= \left\{	1 - \frac{\norm{{{\cl_{0,N}} }^{-1}(x-{z^{0,N}}_{(k)})}^2}{N-|z^{0,N}|^2}  \right\}^{N-k-m-2}
\\
&
\leq \left\{	1 - \frac{\norm{{{\cl_{0,N}} }^{-1}(x-{z^{0,N}}_{(k)})}^2}{N}  \right\}^{N-k-m-2}.
\end{split}
\end{equation}

We observe that, for $N>k+m+2$, the maximum of the function
$$
\left( 1 - \frac{y}{N}\right)^{N-k-m-2} e^y \quad \mbox{for all $y \in (0,N]$}
$$
occurs at $y=k+m+2$; this is seen by checking that the derivative $d/dy$ is positive for $y \in [0,k+m+2)$ and negative for $y \in (k+m+2,N]$. Thus,

$$
\left( 1 - \frac{y}{N}\right)^{N-k-m-2} e^y \leq \left( 1 - \frac{k+m+2}{N}\right)^{N-k-m-2} e^{k+m+2}
$$

Taking $y = \norm{{{\cl_{0,N}} }^{-1}(x-{z^{0,N}}_{(k)})}^2$, we have:

\begin{equation}
    \begin{split}
        &
   \left\{	1 - \frac{\norm{{{\cl_{0,N}} }^{-1}(x-{z^{0,N}}_{(k)})}^2}{N}  \right\}^{N-k-m-2} \\
   \leq& \left( 1 - \frac{k+m+2}{N}\right)^{N-k-m-2} e^{k+m+2}e^{-\norm{{\cl_{0,N}} ^{-1}(x-{z^{0,N}}_{(k)})}^2} \\
&\leq  e^{k+m+2}e^{-\norm{{{\cl_{0,N}} }^{-1}(x-{z^{0,N}}_{(k)})}^2}.
 \end{split}
\end{equation}
Thus
\begin{equation}\label{E:1minusexpbd}
\left\{	1 - \frac{\norm{{{\cl_{0,N}} }^{-1}(x-{z^{0,N}}_{(k)})}^2}{N}  \right\}^\frac{N-k-m-2}{2}
 \leq  e^\frac{k+m+2}{2}e^{\frac{- 1}{2}\norm{{\cl^{-1}_{0,N}} (x-{z^{0,N}}_{(k)})}^2}
\end{equation}

Lemma \ref{L:inequ} gives  the bound:
\begin{equation}\label{E:expbd}
e^{-\frac{1}{2}\norm{{\cl^{-1}_{0,N}} (x-{z^{0,N}}_{(k)})}^2} \leq e^{-\frac{1}{2}\norm{{\cl^{-1}_{0}} (x-{z^{0}}_{(k)})}^2}.
\end{equation}

Let
\begin{equation}
    \hbox{ $a_N(x) = \cl^{-1}_{0}(x-z^{0,N}_{(k)})$ and $a(x) = \cl^{-1}_{0}(x-z^{0}_{(k)})$.}
\end{equation}
Then by (\ref{limitz}), for any $\epsilon > 0$ and large enough $N$

$$
\norm{a_N(x) -a(x)} <\epsilon.
$$
Then
\begin{equation}
\begin{split}
    \norm{a(x)}^2 - \norm{a_N(x)}^2 &= \left(\norm{a(x)} - \norm{a_N(x)}\right)\left(\norm{a(x)} + \norm{a_N(x)}\right)
    \\
    & \leq \norm{a(x) - a_N(x)}\left(\norm{a(x)} + \norm{a_N(x)}\right)\\
    &\leq \epsilon\left( \norm{a(x)} + \norm{a_N(x) -a(x)} + \norm{a(x)}\right) \\
    &\leq \epsilon \left( 2 \norm{a(x)} + \epsilon\right) \\
    \end{split}
\end{equation}
and so
\begin{equation}
-\norm{a_N(x)}^2 \leq \epsilon^2 + 2\epsilon\norm{a(x)} - \norm{a(x)}^2.
\end{equation}
This gives us:
\begin{equation}
\begin{split}
e^{-\norm{a_N(x)}^2/2} &\leq e^{\epsilon^2/2 }e^{{\epsilon}\norm{a(x)}-\norm{a(x)}^2/2}.
  \end{split}
\end{equation}
Now since $\phi$ is a Borel function on $\mbr^k$ which is in $L^p$ with respect to the Gaussian measure with density proportional to
$$e^{-\norm{a(x)}^2/2}\,dx,$$
for some $p>1$.
We have then the bound
\begin{equation}\label{E:fdom}
|\phi(x)    e^{-\norm{a_N(x)}^2/2}| \leq  |{\phi}(x)|  e^{\epsilon^2/2 }e^{    {\epsilon}\norm{a(x)} } e^{-  \norm{a(x)}^2/2}\
\end{equation}
The dominating function is integrable:
\begin{equation}
    \begin{split}
         e^{\epsilon^2/2 }\int_{\mbr^k}{\phi}(x)e^{    {\epsilon}\norm{a(x)} } e^{-  \norm{a(x)}^2/2}\,dx
      \leq c_\epsilon\left\{\int_{\mbr^k}{\phi}(x)^pe^{-\norm{a(x)}^2/2}\,dx\right\}^{1/p},
    \end{split}
\end{equation}
where
\begin{equation}
    c_{\epsilon}
    =e^{\epsilon^2/2 }
    \left\{    \int_{\mbr^k}  e^{     {\epsilon}q\norm{a(x)} -\norm{a(x)}^2/2 }
    \right\}^{1/q}
\end{equation}
and $q$ is the conjugate to $p$ as usual: $p^{-1}+q^{-1}=1$. The integral in $c_{\epsilon} $ is finite because, after changing variables to $y=a(x)$,
\begin{equation}
\int_{\mbr^k}e^{t\norm{y}-\norm{y}^2/2}\,dy =|S^{k-1}|\int_0^{\infty}e^{tR-R^2/2}R^{k-1}\,dR<\infty,
\end{equation}
for any $t\in\mbr$.

Using the argument above we can conclude the dominated convergence in  (\ref{e:domconv2}) holds:
\begin{equation*}
\begin{split}
\lim_{N\to\infty}  \frac{  c_{d-k-m}}{a_{{z^{0,N}}}^{k}c_{d-m}} \int_{\mbr^k}
I_N(x) \,\frac{dx}{|\det {\cl_{0,N}} |} \\
& \hskip -2.25in = \frac{1}{2\pi^{k/2}} \int_{\mbr^k}\lim_{N\to\infty} \phi(x)  \left\{	1 - a_{{z^{0,N}}}^{-2} \norm{{{\cl_{0,N}} }^{-1}(x-{z^{0,N}}_{(k)})}^2  \right\}^{\frac{d-k-m-1}{2}}\,\frac{dx}{|\det {\cl_{0,N}} |}
\end{split}
\end{equation*}
and using the limits (\ref{E:constlim}), (\ref{E:L0Ndet}), and (\ref{E:limN}) we have established:
\begin{align*}
\lim_{N\to\infty} \int_{S^{N-1}(\sqrt{N})\cap Q_N^{-1}(w^0)}f\,d{\bar\sigma}
&=  \lim_{N\to\infty}  \frac{  c_{d-k-m}}{a_{{z^{0,N}}}^{k}c_{d-m}} \int_{\mbr^k}
I_N\,\frac{dx}{|\det {\cl_{0,N}} |} \\
&\hskip -1.75 in =\frac{1}{2\pi^{k/2}} \int_{\mbr^k}\lim_{N\to\infty} \phi(x)  \left\{	1 - a_{{z^{0,N}}}^{-2} \norm{{{\cl_{0,N}} }^{-1}(x-{z^{0,N}}_{(k)})}^2  \right\}^{\frac{d-k-m-1}{2}}\,\frac{dx}{|\det {\cl_{0,N}} |} \\
& \hskip -1.75 in = (2\pi)^{-k/2} \int_{x\in \mbr^k}
\phi(x) \exp\left({- \frac{  \la ({\cl_0}{\cl_0}^*)^{-1}(x-{z^{0}}_{(k)}), x-{z^{0}}_{(k)} \ra}{2} }\right)\,  \frac{dx}{ |\det\cl_0|}.
\end{align*}

\end{proof}

\begin{lemma}\label{L:inequ}
With notation as above,
\begin{equation}\label{E:inequ}
\norm{\cl^{-1}_{0,N}\left(x-z^{0,N}_{(k)}\right)} \geq \norm{\cl^{-1}_{0}\left(x-z^{0,N}_{(k)}\right)}
\end{equation}
\end{lemma}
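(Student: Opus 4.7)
The plan is to reinterpret both sides of (\ref{E:inequ}) as minimum-norm solutions to an underdetermined linear problem, and to observe that one minimum is taken over a subset of the other. Concretely, I will show that for every $y\in\mbr^k$,
\begin{equation*}
\norm{\cl_0^{-1}(y)}=\min\bigl\{\norm{v}:\,v\in\ker Q,\ P_{(k)}(v)=y\bigr\},
\end{equation*}
and similarly with the infinite-dimensional objects replaced by their $N$-th finite-dimensional counterparts. Since $\ker Q_N=\ker Q\cap Z_N\subseteq\ker Q$ and $P_{(k)}$ is the same map in both cases, the $N$-th minimum is taken over a smaller feasible set and is therefore at least as large; applying this with $y=x-z^{0,N}_{(k)}$ gives (\ref{E:inequ}).

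To establish the variational identity, I would proceed as follows. For $y\in\mbr^k$ any $v\in\ker Q$ with $P_{(k)}(v)=y$ admits the orthogonal decomposition $v=v_0+v_1$ inside $\ker Q$, with $v_0\in\ker Q\ominus\ker\cl$ and $v_1\in\ker\cl$. Since $\cl(v_1)=0$ we get $\cl_0(v_0)=\cl(v)=y$, so $v_0=\cl_0^{-1}(y)$ is forced, independent of the particular $v$ chosen. The Pythagorean identity then gives
\begin{equation*}
\norm{v}^2=\norm{\cl_0^{-1}(y)}^2+\norm{v_1}^2\;\geq\;\norm{\cl_0^{-1}(y)}^2,
\end{equation*}
with equality when $v_1=0$; this proves the identity for $\cl_0$. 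The same reasoning with $Q_N,\cl_N,\cl_{0,N}$ in place of $Q,\cl,\cl_0$ gives the analogous identity for $\cl_{0,N}$, using only that $\cl_{0,N}$ is the restriction of $\cl_N$ to $\ker Q_N\ominus\ker\cl_N$.

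With the two variational characterizations in hand, the inequality is immediate: the feasible set $\{v\in\ker Q_N:P_{(k)}(v)=y\}$ is a subset of $\{v\in\ker Q:P_{(k)}(v)=y\}$, so the minimum of $\norm{v}$ over the former is at least the minimum over the latter, i.e.\ $\norm{\cl_{0,N}^{-1}(y)}\geq\norm{\cl_0^{-1}(y)}$. There is no real obstacle here; the only point worth checking is that the $N$-th feasible set is nonempty, which is precisely the surjectivity of $\cl_{0,N}$ that holds for all large $N$ by \cite{SenGRL2018} Proposition~6.2 and is already invoked elsewhere in the paper.
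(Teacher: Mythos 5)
Your proof is correct and is essentially the same argument as the paper's: both identify $\cl_0^{-1}(y)$ and $\cl_{0,N}^{-1}(y)$ as the minimum-norm elements of the fibers $\cl^{-1}(y)$ and $\cl_N^{-1}(y)$ respectively, note that $\ker Q_N\subseteq\ker Q$ so the second fiber is contained in the first, and conclude that the minimum over the smaller set is at least the minimum over the larger one. Your version merely makes the Pythagorean step slightly more explicit than the paper does.
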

\begin{proof}
Recall the definition of $\cl$, it is the projection on $k$ coordinates restricted to the $\ker Q$:
\begin{eqnarray}
\cl:\ker Q \to X = \mbr^k
\end{eqnarray}
and $\cl_{0}$ is the restriction of $\cl$ to the orthogonal complement of $\ker \cl$ inside $\ker Q$. Since $\cl$ is surjective $\cl_0$ is an isomorphism. Let $x \in \mbr^k$  and $y_0 = \cl_0^{-1}(x)$ then any vector $y \in \cl^{-1}(x)$ can be written as
$$
y = y_0 + v \quad v \in \ker \cl.
$$
This means
$$
\norm{y} = \norm{y_0 + v} \quad v \in \ker \cl
$$
for all $y \in \cl^{-1}(x)$
therefore $y_0 = \cl^{-1}_{0}(x)$ is the point in $\cl^{-1}(x)$ of smallest norm.
By the same argument, provided $N$ is large enough for $\cl_N$ to be a surjection, for $x \in \mbr^k$, the point $\cl^{-1}_{0,N}(x)$ is the point on $\cl^{-1}_N(x)$ of smallest norm.

Let $y \in \ker (Q_N)$ then $y \in \mbr^N$ and $Q_Ny = 0$. Taking $\mbr^N$ to be contained in $l^2$  as $\mbr^N \oplus \{0\}$ then for all $y \in \ker Q_N$ we take $y = (y,0)$. Now
$$
0= Q_Ny = Q(J_Ny)
$$
therefore $J_Ny \in \ker Q$ and so $(y,0) \in \ker Q$. Thus $\ker Q_N$ is contained in $\ker Q$.

Now for $y \in \ker Q_N$ we have $\cl (y) = \cl_N(y)$ since both $\cl$ and $\cl_N$ are the projection onto the first $k$ coordinates.  Since $\cl_N^{-1}(x)$ is all $y \in \ker Q_N$ such that $\cl_N (y) = x $ it is contained in $\cl^{-1}(x)$.

Now we have the inequality (\ref{E:inequ}).
\end{proof}

Let us look at an example that shows the necessity of the $L^p$, $p>1$, condition and the difficult nature of the limit of Gaussian integrals above. In this context for the function
$$g(x)=e^{x^2/2}(1+x^2)^{-1}\qquad\hbox{for all $x\in\mbr$,}$$
we have
\begin{equation}
\int_{\mbr}g(x)e^{-x^2/2}dx<\infty\qquad\hbox{but}\qquad \int_{\mbr}g(x)e^{-(x-x_N)^2/2}dx=\infty \quad\hbox{for all $x_N\neq 0$,}
\end{equation}
and so
\begin{equation}
\lim_{z\to 0}\int_{\mbr}g(x)e^{-(x-z)^2/2}dx \neq \int_{\mbr}g(x)e^{-x^2/2}dx.
\end{equation}

\par\bigskip\noindent
{\bf{ Acknowledgments.} }  We would like to thank Irfan Alam for many helpful discussions on the subject. We would also like to thank the online TikZ community and  Arthur Parzygnat for help with the figure.

\bibliographystyle{amsplain}

\end{document}